\newtheorem{lemma}{Lemma}
\newtheorem{thm}{Theorem}
\newtheorem{corollary}{Corollary}
\numberwithin{equation}{section}
\begin{document}
	\leftline{ \scriptsize \it}
	\title[]
	{Improved Convergence and Approximation properties of Baskakov-Durrmeyer Operators}
	\maketitle\
	{\bf \begin{center}
			{\bf  Jaspreet Kaur$^{1,2,a}$ and Meenu Goyal$^{2,b}$}
			\vskip 0.2in
				$^{1}$Department of Mathematics\\
			GLA University\\
			 Mathura-281406, India\\
			\vskip0.2in
			$^{2}$Department of Mathematics\\
			Thapar Institute of Engineering and Technology\\
			Patiala-147004, India\\
			\vskip0.2in
			${}^a$jazzbagri3@gmail.com and ${}^b$meenu\_rani@thapar.edu
	\end{center}}
	\begin{abstract}
	%In the present work, we introduce two modifications of the Baskakov-Durrmeyer operators to achieve improved rates of convergence of order one and two, surpassing the linear rate of convergence of the classical Baskakov-Durrmeyer operators. We analyze the approximation properties of these modifications, focusing on the rate of convergence, error estimation, and Voronovskaja-type formulas.
	In this paper, we describe two novel changes to the Baskakov-Durrmeyer operators that improve their approximation performance. These improvements are especially designed to produce higher rates of convergence, with orders of one or two. This is a major improvement above the linear rate of convergence commonly associated with conventional Baskakov-Durrmeyer operators. Our research goes thoroughly into the approximation features of these modified operators, providing a thorough examination of their convergence behavior. We concentrate on calculating precise convergence rates, providing thorough error estimates that demonstrate the new operators' efficiency as compared to the classical version. In addition, we construct Voronovskaja-type formulae for these operators, which provide insights into the asymptotic behavior of the approximation process as the operator's degree grows. By exploring these aspects, we demonstrate that the proposed modifications not only surpass the classical operators in terms of convergence speed but also offer a more refined approach to error estimation, making them a powerful tool for approximation theory.
	
\textbf{Keywords:} Baskakov-Durrmeyer operators, linear positive operators, Voronovskaja type asymptotic result. 

		\textbf{Mathematics Subject Classification(2010):} 40A05, 41A10, 41A25, 41A35, 41A60.
	\end{abstract}
	\section{Introduction}
V. A. Baskakov \cite{Baskakov1957} defined a basic family of linear positive operators in approximation theory, called Baskakov operators, for $f\in C[0, \infty)$ in following form:
\begin{align}
	V_n(f;x)=&\sum_{k=0}^{\infty}p_{n,k}(x)\, f\left(\dfrac{k}{n}\right),\\
\text{where}\,	p_{n,k}(x)=& \binom{n+k-1}{k} \dfrac{x^k}{(1+x)^{n+k}},
\end{align}
 and $C[0, \infty)$ is the class of all continuous functions on $[0,\infty).$  These operators belong to a larger family of positive linear operators that includes well-known operators such as Bernstein \cite{Bernstein1912} and Sz\'asz-Mirakjan \cite{szasz1950general}. This makes them especially helpful in circumstances when precise function representations are unknown or too difficult to work with directly. Baskakov operators employ probabilistic approaches and binomial-type weights to perform function approximation, making them very efficient in certain applications.
Baskakov operators are used in practical applications which require smooth approximations or interpolations. One significant use is numerical analysis, where they contribute to approximate complicated functions for faster calculation, particularly in tasks like as numerical integration or differentiation. These operators are also employed in signal processing to recreate smooth signals from noisy or partial data, hence reducing distortions.

%xAnother use for Baskakov operators is the solving of differential and integral equations. Baskakov operators reduce computer complexity by approximating the functions involved in these equations, resulting in more tractable solutions to problems in physics, engineering, and applied mathematics.
%Several extensions of Baskakov operators have been created throughout time in order to increase their efficiency, widen their applicability, and overcome inherent limits.
 Although classical Baskakov operators are effective in many approximation applications, but still they have several limitations, such as a sluggish linear rate of convergence and issues approximating functions with greater levels of smoothness. To address these restrictions, researchers created a variety of modifications and generalizations in order to increase their efficiency, widen their applicability, and overcome inherent limits, each adapted to individual purposes. The Baskakov-Durrmeyer operators \cite{sahai1985simu} are well-known generalizations that combine the qualities of both Baskakov and Durrmeyer operators and contain an integral form, making it especially good for approximating smoother functions with better precision. Sahai and Prasad \cite{sahai1985simu} defined these operators in the following way:
 \begin{align}\label{BasDur}
 	D_n(f;x)&=(n-1)\sum_{k=0}^{\infty}p_{n,k}(x)\int_0^{\infty}p_{n,k}(t)\,f(t)\,dt,
 \end{align} and studied their approximation properties. Due to its well known properties, researchers defined other Durrmeyer type operators as one see \cite{finta2008dur, gupta2014different} and the references therein. Another notable generalization is the introduction of q-Baskakov operators, which generalizes the conventional form via q-calculus, giving additional freedom in controlling  convergence behavior and approximation quality. One can see the literature on these generalizations by \cite{agrawal2014appro, aral2010durr, aral2011gene, gupta2011some} Similarly, Stancu-Baskakov operators incorporate an additional parameter, allowing for better control and adaptability, particularly when dealing with functions of varying smoothness.

Also, Boehme and Brucner defined the Baskakov-Kantorovich operators \cite{boeh1964}, which incorporate Kantorovich variant characteristics, resulting in improved handling of non-smooth functions and more precise error estimates. A lot of research is done on these operators \cite{abel2003esti, totik1985satu} and their citations. Extensions of bivariate and multivariate Baskakov operators have been created to approximate functions of several variables, which is beneficial in complicated applications like image processing and computer-aided geometric design.
% In addition, max-product and fuzzy Baskakov operators have been created for applications needing non-linear systems and fuzzy logic, which provide approximations in situations with uncertainty or inaccurate data. These developments reflect important advances in approximation theory, allowing for more precise, adaptable, and economical approximations across varied disciplines such as numerical analysis, computer science, and engineering.

The order of approximation of the operators must be improved to satisfy the increasing need for higher precision, faster convergence, and enhanced computing efficiency across a wide range of domains. Classical operators, such as the Baskakov or Bernstein operators, often have a slow linear rate of convergence, limiting their ability to approximate complex or smooth functions with acceptable accuracy. Improving the order of these operators not only produces more accurate approximations, but it also speeds up the convergence process, lowering computing costs and increasing time efficiency. This is especially crucial in applications like numerical simulations, solving differential and integral equations, and data interpolation, where little inaccuracies can result in considerable variances. Higher-order operators are also more appropriate for approximating smooth functions with high regularity, as well as handling functions  with irregularities such as discontinuities or sharp gradients. Furthermore, they provide better error estimates, tighter control over approximation accuracy, and higher flexibility, making them more dependable in applications including signal processing, picture reconstruction, and machine learning. Overall, enhancing the order of approximation operators solves the constraints of traditional approaches, allowing for more exact, economical, and adaptive solutions to current mathematical and computing issues.

Recently, in \cite{khosravian2018new}, Khosravian-Arab \emph{et al.} modified the well known Bernstein operators by using a new technique to improve their degree of approximation. Following this, Acu et al. \cite{Acu2019} have applied this approach on the Bernstein-Durrmeyer operators. In another paper \cite{gupta2019modi}, same authors have put it on the Bernstein-Kantorovich operators too. Similarly, Kajla and Acar \cite{kajla2019modified} have modified the $\alpha-$Bernstein summation operators. Following this, Jabbar and Hassan \cite{jabbar2024bet} modified the Baskakov operators with two modifications and studied their properties. This current research has been devoted to improve their convergence rates, leading to modifications and enhancements of the Baskakov-Durrmeyer operators. These improvements enable faster and more accurate approximations, making Baskakov type operators an even more powerful tool in modern approximation theory and its applications.

The present article is organized in the following form:
In Section \ref{first}, we present the first order modification of classical Baskakov-Durrmeyer operators and study their convergence and asymptotic formula. In the next section, we define another modification whose order of convergence is $2.$ We also present its Voronovskaja type result.
\section{First order modification}\label{first}
In this section, we define the modification of Baskakov-Durrmeyer operators \eqref{BasDur} by using its perturbed relation and study its approximation results. The operators are defined as
\begin{align}\label{mod1}
	V_{n,1}(f;x)&=(n-1)\sum_{k=0}^{\infty}p_{n,k}^1(x)\int_0^{\infty}p_{n,k}(t)\,f(t)\,dt,\\
	\text{where}\,\,p_{n,k}^1(x)&=a(x,n)p_{n+1,k}(x)+b(x,n)p_{n+1,k-1}(x)\nonumber\\
	\text{and}\,\, a(x,n)&=a_0(n)+a_1(n)x,\,\, b(x,n)=a_0(n)-a_1(n)(1+x),\nonumber
\end{align}
the sequences $a_0(n)$ and $a_1(n)$ are to be determined in order to satisfy the convergence properties. By choosing $a_0(n)=a_1(n)=1,$ we get the classical Baskakov-Durrmeyer operators.

Now, firstly we find some basic identities for the newly defined operators that are useful for further approximation properties. The moments are a basic tool to check the convergence of the operators by using Korovkin theorem.
\begin{lemma}\label{lemma1}
	 The moments of these operators are listed below:
	\begin{align*}
			V_{n,1}(1;x)=&2a_0(n)-a_1(n);\\
				V_{n,1}(t;x)=&(2a_0(n)-a_1(n))x+\dfrac{1}{n-2}\left[(1+2x)(3a_0(n)-2a_1(n))\right];\\
					V_{n,1}(t^2;x)=&(2a_0(n)-a_1(n))x^2+\dfrac{1}{(n-2)(n-3)}\left[a_0(n)(8+10x-8x^2)+a_1(n)(-6-10x+2x^2)\right.\\
				&	\left.+n\left\{a_0(n)(10x+16x^2)-a_1(n)(6x+10x^2)\right\}\right];\\
						V_{n,1}(t^3;x)=&(2a_0(n)-a_1(n))x^3+\dfrac{1}{(n-2)(n-3)(n-4)}\left[a_0(n)(30+54x+42x^2+60x^3)\right.\\
		     	&+a_1(n)(-24-54x-42x^2-36x^3)+n\left\{a_0(n)(54x+63x^2-30x^3)\right.\\
			    &+\left.\left.a_1(n)(-36x-54x^2+6x^3)\right\}+n^2\left\{a_0(n)(21x^2+30x^3)+a_1(n)(-12x^2-18x^3)\right\}\right];\\
			V_{n,1}(t^4;x)=& (2a_0(n)-a_1(n))x^4+\dfrac{1}{(n-2)(n-3)(n-4)(n-5)}\left[a_0(n)\left(144+336x+382x^2+216x^3-192x^4\right)\right.\\
			&+a_1(n)\left(-120-336x-382x^2-214x^3+72x^4\right)+n\left\{a_0(n)\left(336x+573x^2+396x^3+408x^4\right)\right.\\
			&+\left.a_1(n)\left(-240x-501x^2-361x^3-248x^4\right)\right\}+n^2\left\{a_0(n)\left(191x^2+216x^3-72x^4\right)\right.\\
			&+\left.a_1(n)\left(-119x^2-167x^3+12x^4\right)\right\}+n^3\left\{a_0(n)\left(36x^3+48x^4\right)+a_1(n)\left(-20x^3-28x^4\right)\right\}
			\left.\right].
	\end{align*}
\end{lemma}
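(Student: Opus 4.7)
The plan is to reduce the computation of each moment to a combination of factorial moments of the classical Baskakov distribution with parameter $n+1$, and then substitute the definitions of $a(x,n)$ and $b(x,n)$.

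First, using the definition of $p_{n,k}^1(x)$, I would write
$$V_{n,1}(t^m;x)=a(x,n)\,(n-1)\sum_{k=0}^{\infty} p_{n+1,k}(x)\,J_{n,k,m}+b(x,n)\,(n-1)\sum_{k=0}^{\infty} p_{n+1,k-1}(x)\,J_{n,k,m},$$
where $J_{n,k,m}=\int_{0}^{\infty}p_{n,k}(t)\,t^m\,dt$. The inner integral is a Beta integral, and a direct computation gives $(n-1)J_{n,k,m}=\dfrac{(k+m)!\,(n-m-2)!}{k!\,(n-2)!}$, a rational function of $k$ whose denominator is $(n-2)(n-3)\cdots(n-m-1)$. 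In the second sum I would shift the index $j=k-1$ (the $k=0$ term drops since $p_{n+1,-1}\equiv 0$) to rewrite it as $\sum_{j\ge 0}p_{n+1,j}(x)\tfrac{(j+m+1)!}{(j+1)!}\cdot\tfrac{(n-m-2)!}{(n-2)!}$. The problem thus reduces to evaluating, for each $m\in\{0,1,2,3,4\}$, the two rising-factorial sums
$$A_m=\sum_{k\ge 0}p_{n+1,k}(x)\,(k+1)(k+2)\cdots(k+m),\qquad B_m=\sum_{k\ge 0}p_{n+1,k}(x)\,(k+2)(k+3)\cdots(k+m+1).$$

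Next, I would compute $A_m$ and $B_m$ from the falling-factorial moments of the Baskakov distribution with parameter $n+1$, namely $\sum_k p_{n+1,k}(x)\,k(k-1)\cdots(k-r+1)=(n+1)(n+2)\cdots(n+r)\,x^r$, which can be derived either from the probability generating function $(1+x(1-z))^{-(n+1)}$ or by a direct binomial manipulation. Expanding the rising factorials in $A_m$ and $B_m$ as combinations of falling factorials (via Stirling numbers of the second kind, or by elementary expansion for these low orders) converts each $A_m$ and $B_m$ into a closed-form polynomial in $x$ with coefficients polynomial in $n$.

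Finally, I would substitute $a(x,n)=a_0(n)+a_1(n)x$ and $b(x,n)=a_0(n)-a_1(n)(1+x)$, then collect terms by powers of $x$ and regroup according to the factors of $n$ appearing in the denominator $(n-2)(n-3)\cdots(n-m-1)$. A built-in sanity check comes from $V_{n,1}(1;x)$: since $A_0=B_0=1$, the combination $a(x,n)+b(x,n)=2a_0(n)-a_1(n)$ appears directly, confirming the zeroth moment. The main obstacle is bookkeeping in the cubic and quartic cases, where one expands polynomials of degree up to five in $k$, combines the two contributions from $A_m$ and $B_m$, and reconciles the tabulated coefficients at each power of $n$; the computation is mechanical but error-prone, so I would cross-check by specializing to $a_0(n)=a_1(n)=1$, which must reproduce the classical Baskakov-Durrmeyer moments.
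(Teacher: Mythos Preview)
Your proposal is correct and follows essentially the same approach as the paper: both evaluate the Beta integral $(n-1)\int_0^\infty p_{n,k}(t)\,t^m\,dt$ and then reduce the remaining sums to moments of the Baskakov basis $p_{n+1,k}$. The only cosmetic difference is that the paper records the ordinary power moments $\sum_k p_{n+1,k}(x)\,k^j$ and $\sum_k p_{n+1,k-1}(x)\,k^j$ directly (without the index shift), whereas you shift first and work through the falling-factorial identity $\sum_k p_{n+1,k}(x)\,k(k-1)\cdots(k-r+1)=(n+1)\cdots(n+r)\,x^r$; since the rising factorials $(k+1)\cdots(k+m)$ emerge naturally from the Beta integral, your organization is marginally tidier, but the two computations are interchangeable.
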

\begin{proof}
	In order to prove these identities, we use the following relations and properties of beta function:
	\begin{align*}
		\sum_{k=0}^{\infty}p_{n+1,k}(x)k=&(n+1)x\\
			\sum_{k=0}^{\infty}p_{n+1,k}(x)k^2=& (n+1)(n+2)x^2+(n+1)x\\
			\sum_{k=0}^{\infty}p_{n+1,k}(x)k^3=&(n+1)(n+2)(n+3)x^3+3(n+1)(n+2)x^2+(n+1)x\\
			\sum_{k=0}^{\infty}p_{n+1,k}(x)k^4=&(n+1)(n+2)(n+3)(n+4)x^4+6(n+1)(n+2)(n+3)x^3+7(n+1)(n+2)x^2\\
			&+(n+1)x\\
			\sum_{k=0}^{\infty}p_{n+1,k-1}(x)k=&(n+1)x+1\\
				\sum_{k=0}^{\infty}p_{n+1,k-1}(x)k^2=&(n+1)(n+2)x^2+3(n+1)x+1\\
				\sum_{k=0}^{\infty}p_{n+1,k-1}(x)k^3=&(n+1)(n+2)(n+3)x^3+6(n+1)(n+2)x^2+7(n+1)x+1\\
			\sum_{k=0}^{\infty}p_{n+1,k-1}(x)k^4=&(n+1)(n+2)(n+3)(n+4)x^4+10(n+1)(n+2)(n+3)x^3+24(n+1)(n+2)x^2\\
			&+15(n+1)x+1.
			\end{align*}
	\end{proof}
	In order to achieve $V_{n,1}(1;x)=1,$ we have to choose the unknown sequences $a_0(n)$ and $a_1(n)$ satisfying the following relation:
	\begin{align}\label{seqrel}
		2a_0(n)-a_1(n)=1.
			\end{align}
			Using this relation, we get seven cases for the sequences $a_0(n)$ and $a_1(n)$ listed below:
			\begin{enumerate}[{Case} 1.]
				\item If $a_0(n)=1,$ then $a_1(n)=1.$
				\item If $a_1(n)=0,$ then $a_0(n)=\frac{1}{2}.$
			\item If $a_1(n)>1,$ then $a_0(n)>1.$
			\item If $0< a_1(n)<1,$ then $0< a_0(n)<1.$
			\item If $a_1(n)=-1,$ then $a_0(n)=0.$
			\item If $a_1(n)<-1,$ then $a_0(n)<0.$
			\item If $-1<a_1(n)<0,$ then $0<a_0(n)<\frac{1}{2}.$
			\end{enumerate}	
				It can be easily verified that the operators $V_{n,1}(f;x)$ are positive for the Cases $1-4$ and for $5-6$ the operators are not positive, whereas for Case $7,$ they can have both possibilities.
	\begin{lemma}\label{lemma2}
		The central moments of these operators are given as
		\begin{align*}
				V_{n,1}(t-x;x)=& \dfrac{1}{n-2}[(1+2x)(3a_0(n)-2a_1(n))];\\
					V_{n,1}((t-x)^2;x)=&\dfrac{1}{(n-2)(n-3)}[a_0(n)(-1-8x-8x^2)+a_1(n)2x(1+x)\\
					&+na_0(n)(3+16x+16x^2)+na_1(n)(-2-10x-10x^2)];\\
					V_{n,1}((t-x)^4;x)=&\dfrac{1}{(n-2)(n-3)(n-4)(n-5)}\left[a_0(n)(144+936x+2422x^2+2976x^3+1488x^4)\right.\\
					&\left.+a_1(n)(-120-816x-2182x^2-2734x^3-1368x^4)\right.\\
					&+n\left\{a_0(n)(216x+1005x^2+1584x^3+792x^4)+a_1(n)(-144x-681x^2-1077x^3-540x^4)\right\}\\
					&+\left.n^2\left\{a_0(n)(23x^2+48x^3+24x^4)+a_1(n)(-11x^2-23x^3-12x^4)\right\}\right].
		\end{align*}
	\end{lemma}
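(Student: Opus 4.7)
The plan is to obtain each central moment by expanding $(t-x)^j$ via the binomial theorem and invoking the linearity of $V_{n,1}$, then substituting the ordinary moments from Lemma \ref{lemma1}. Concretely, I would write
\begin{align*}
V_{n,1}(t-x;x) &= V_{n,1}(t;x) - x\,V_{n,1}(1;x),\\
V_{n,1}((t-x)^2;x) &= V_{n,1}(t^2;x) - 2x\,V_{n,1}(t;x) + x^2\,V_{n,1}(1;x),\\
V_{n,1}((t-x)^4;x) &= V_{n,1}(t^4;x) - 4x\,V_{n,1}(t^3;x) + 6x^2\,V_{n,1}(t^2;x) - 4x^3\,V_{n,1}(t;x) + x^4\,V_{n,1}(1;x),
\end{align*}
and insert the closed-form expressions produced in Lemma \ref{lemma1}.

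First I would handle $V_{n,1}(t-x;x)$, where the leading $(2a_0(n)-a_1(n))x$ from $V_{n,1}(t;x)$ exactly cancels $x\,V_{n,1}(1;x) = (2a_0(n)-a_1(n))x$, leaving only the $\frac{1}{n-2}[(1+2x)(3a_0(n)-2a_1(n))]$ term, matching the claim. Next, for $V_{n,1}((t-x)^2;x)$ I would observe the same cancellation of the $x^2$-coefficients at leading order, so that only the pieces with denominator $(n-2)(n-3)$ survive; I would then collect by powers of $x$ and by $a_0(n),a_1(n)$, being careful to multiply the $\frac{1}{n-2}$ term of $V_{n,1}(t;x)$ by $-2x(n-3)/(n-3)$ in order to put everything over the common denominator $(n-2)(n-3)$.

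The fourth central moment is the main obstacle. After the top-order cancellation of the $x^4$ coefficients, one is left with contributions over four different denominators $(n-2), (n-2)(n-3), (n-2)(n-3)(n-4), (n-2)(n-3)(n-4)(n-5)$, and these must be brought over the common denominator $(n-2)(n-3)(n-4)(n-5)$. The bookkeeping is heavy: for each of $a_0(n)$ and $a_1(n)$, I would tabulate the coefficient of every monomial $x^j$ ($j=0,1,2,3,4$) arising from $V_{n,1}(t^4;x)$, from $-4x\,V_{n,1}(t^3;x)$ (after multiplying numerator and denominator by $(n-5)$), from $6x^2\,V_{n,1}(t^2;x)$ (after multiplying by $(n-4)(n-5)$), and from $-4x^3\,V_{n,1}(t;x)$ (after multiplying by $(n-3)(n-4)(n-5)$), then sum these columns. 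The polynomial identities in $n$ that must hold are straightforward to verify but long.

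Finally, I would note that the whole derivation is purely algebraic and does not depend on the normalization $2a_0(n)-a_1(n)=1$; this explains why $a_0(n)$ and $a_1(n)$ appear as free parameters in the stated formulae, and it also gives a useful internal consistency check, namely that imposing $2a_0(n)-a_1(n)=1$ reduces the expressions to a form that can be compared with the known central moments of the classical Baskakov--Durrmeyer operators (recovered at $a_0=a_1=1$).
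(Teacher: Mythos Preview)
Your approach is correct and matches the paper's own: the paper simply states that the central moments follow from Lemma~\ref{lemma1} together with the linearity of $V_{n,1}$, and omits the details. Your write-up is just an explicit version of that same computation.
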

	\begin{proof}
		The proof of these identities is easy to prove by using Lemma \ref{lemma1} and linear property of the operators. Therefore, we omit the details.
	\end{proof}	
	\begin{corollary}\label{lemma3} Now, we list some limiting identities for central moments.
		\begin{align*}
			\lim_{n\rightarrow \infty} nV_{n,1}(t-x,x)=&(1+2x)(3l-2m);\\
			\lim_{n\rightarrow \infty} nV_{n,1}((t-x)^2,x)=&l(3+16x+16x^2)-m(2+10x+10x^2); \\
			\lim_{n\rightarrow \infty} nV_{n,1}((t-x)^4,x)=& lx^2(23+48x+24x^2)-mx^2(11+23x+12x^2),
		\end{align*}
		where $l=\displaystyle \lim_{n\rightarrow\infty} a_0(n)$ and $m=\displaystyle \lim_{n\rightarrow \infty}a_1(n).$
	\end{corollary}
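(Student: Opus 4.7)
The plan is to derive each limit by direct substitution into the central-moment formulas of Lemma~\ref{lemma2} and passage to the limit $n\to\infty$, using only the hypotheses $a_0(n)\to l$, $a_1(n)\to m$ together with elementary rational-function limits such as $n^k/[(n-2)(n-3)\cdots(n-k-1)]\to 1$ for each fixed $k\ge 1$.

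For the first identity, I will write $n\, V_{n,1}(t-x;x) = \frac{n}{n-2}(1+2x)(3a_0(n)-2a_1(n))$ directly from Lemma~\ref{lemma2}, and then send $n\to\infty$ to obtain $(1+2x)(3l-2m)$.

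For the second identity, I will split the bracketed expression in Lemma~\ref{lemma2} into its $n$-independent part (coefficients $-1-8x-8x^2$ for $a_0(n)$ and $2x(1+x)$ for $a_1(n)$) and its $n$-multiplied part (coefficients $3+16x+16x^2$ for $a_0(n)$ and $-2-10x-10x^2$ for $a_1(n)$). After multiplying by $n$ and dividing by $(n-2)(n-3)$, the $n$-independent part acquires a factor $n/[(n-2)(n-3)]\to 0$, while the $n$-multiplied part acquires $n^2/[(n-2)(n-3)]\to 1$. Thus only the latter survives, yielding $l(3+16x+16x^2)-m(2+10x+10x^2)$.

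For the fourth identity, the same strategy applies: the bracketed expression of Lemma~\ref{lemma2} collects terms of degrees $0$, $1$, and $2$ in $n$, and only the top-degree ones can give a nonzero contribution in the limit once we multiply by the appropriate power of $n$ and divide by $(n-2)(n-3)(n-4)(n-5)$. The surviving coefficients $a_0(n)(23x^2+48x^3+24x^4)+a_1(n)(-11x^2-23x^3-12x^4)$ then pass to $lx^2(23+48x+24x^2)-mx^2(11+23x+12x^2)$ in the limit, matching the claim. The only real obstacle is organizational: one must carefully track which $n$-power contributions dominate so that no term is miscounted; no substantive new idea is required beyond what is already in Lemma~\ref{lemma2}.
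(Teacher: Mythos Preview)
Your approach is exactly what the paper intends: the corollary is stated there without proof as an immediate consequence of Lemma~\ref{lemma2}, obtained by isolating the highest power of $n$ in each bracketed numerator and letting $n\to\infty$. Your handling of the first two identities is correct and complete.

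For the third identity, however, your phrase ``the appropriate power of $n$'' hides a genuine discrepancy. The statement asks for $\lim_{n\to\infty} n\,V_{n,1}((t-x)^4;x)$, but the denominator $(n-2)(n-3)(n-4)(n-5)$ has degree~$4$ in $n$ while the highest power of $n$ in the bracketed numerator of Lemma~\ref{lemma2} is $n^2$. Consequently $n\,V_{n,1}((t-x)^4;x)=O(1/n)\to 0$, not the nonzero expression displayed. The claimed right-hand side $lx^2(23+48x+24x^2)-mx^2(11+23x+12x^2)$ is obtained only if one multiplies by $n^2$, which is in fact the scaling used later in the proof of Theorem~\ref{thm3}, where $n^2V_{n,1}((t-x)^4;x)$ appears under the square root. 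Your argument becomes complete once you either flag this as a typo in the stated corollary (the factor should be $n^2$) or observe that, read literally with the factor $n$, the third limit is $0$.
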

	\begin{thm}\label{thm1}
		Let $f\in C_B[0,\infty)$ and the convergence sequences $a_0(n), a_1(n)$ satisfy the \eqref{seqrel}, and the operators are positive, then the operators $	V_{n,1}(f;x)$ \eqref{mod1} converges uniformly to $f(x)$ on a compact subset $A\subset [0,\infty).$
	\end{thm}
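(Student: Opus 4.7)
The plan is to invoke the Bohman--Korovkin theorem for positive linear operators on a compact interval, which reduces the problem to checking that $V_{n,1}(e_i;x)\to e_i(x)$ uniformly on $A$ for the three test functions $e_0=1,\ e_1=t,\ e_2=t^2$. Positivity of the operators is part of the hypothesis (so we are implicitly in one of Cases 1--4, or a subcase of Case 7 yielding positivity), hence Korovkin's theorem applies once the three limits are established.

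First, I would note that the normalization condition \eqref{seqrel} gives immediately, from Lemma \ref{lemma1}, that $V_{n,1}(1;x)=2a_0(n)-a_1(n)=1$, so uniform convergence on $A$ for $e_0$ is trivial. Next, I would read off from Lemma \ref{lemma1} the expression
\begin{equation*}
V_{n,1}(t;x)=x+\frac{1}{n-2}(1+2x)\bigl(3a_0(n)-2a_1(n)\bigr),
\end{equation*}
and observe that since the sequences $a_0(n),a_1(n)$ are convergent (so in particular bounded), the error term is $O(1/n)$ uniformly for $x$ in the compact set $A$; hence $V_{n,1}(t;x)\to x$ uniformly on $A$. Similarly, the leading term of $V_{n,1}(t^2;x)$ from Lemma \ref{lemma1} is $(2a_0(n)-a_1(n))x^2=x^2$ by \eqref{seqrel}, and the remaining terms carry a factor $\frac{1}{(n-2)(n-3)}$ multiplied by polynomials in $x$ with coefficients that are $O(n)$; dividing, these contribute $O(1/n)$ uniformly on $A$, so $V_{n,1}(t^2;x)\to x^2$ uniformly on $A$.

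Finally, I would conclude by the Bohman--Korovkin theorem: a sequence of positive linear operators $L_n$ on $C[a,b]$ for which $L_n(e_i)\to e_i$ uniformly for $i=0,1,2$ satisfies $L_n(f)\to f$ uniformly for every $f\in C[a,b]$; restricting $f\in C_B[0,\infty)$ to the compact set $A$ yields a continuous function on $A$, to which the theorem applies, completing the proof.

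There is no real obstacle here; the only mildly subtle point is that the argument requires the sequences $a_0(n)$ and $a_1(n)$ to be bounded (equivalently, to possess finite limits $l,m$ as in Corollary \ref{lemma3}) so that the $O(1/n)$ estimates on the moment remainders are genuinely uniform on $A$. This is built into the hypothesis that these are ``convergence sequences,'' and together with \eqref{seqrel} and the assumed positivity, it suffices to activate Korovkin.
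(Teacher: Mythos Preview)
Your proposal is correct and follows essentially the same route as the paper: verify via Lemma~\ref{lemma1} and condition~\eqref{seqrel} that $V_{n,1}(e_i;x)\to e_i(x)$ for $i=0,1,2$, then invoke the Korovkin theorem using the assumed positivity. Your write-up is in fact a bit more careful than the paper's, since you explicitly note that boundedness of the sequences $a_0(n),a_1(n)$ (implied by their convergence) is what makes the remainder terms $O(1/n)$ uniformly on the compact set $A$.
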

	\begin{proof}
	By using Lemma \ref{lemma1}, we can easily check that 
		\begin{align*}
			\lim_{n\rightarrow \infty} 	V_{n,1}(e_0;x)=1;\\
				\lim_{n\rightarrow \infty} 	V_{n,1}(e_1;x)=x;\\
					\lim_{n\rightarrow \infty} 	V_{n,1}(e_2;x)=x^2,
		\end{align*}
		where $e_i=t^i.$ Since the operators are positive, therefore with the help of famous Korovkin theorem \cite{alto2010Kor}, we can prove the uniform convergence of the operators.
	\end{proof}	
	\begin{thm}\label{thm2}
			Let $f\in C_B[0,\infty)$ and the convergence sequences $a_0(n), a_1(n)$ satisfy the \eqref{seqrel} and if the operators are not positive, then the operators satisfy $	\displaystyle \lim_{n\rightarrow \infty} V_{n,1}(f;x)=f(x)$  on a compact subset $A\subset [0,\infty).$
	\end{thm}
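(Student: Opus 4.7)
The plan is to exploit the fact that, although $V_{n,1}$ may fail to be positive in Cases 5--7, it can still be written as an affine combination of two genuinely positive operators whose coefficients sum to one. First, I would introduce the auxiliary operators
\begin{align*}
L_n^{(1)}(f;x) &= (n-1)\sum_{k=0}^{\infty} p_{n+1,k}(x) \int_0^{\infty} p_{n,k}(t)\, f(t)\,dt,\\
L_n^{(2)}(f;x) &= (n-1)\sum_{k=0}^{\infty} p_{n+1,k-1}(x) \int_0^{\infty} p_{n,k}(t)\, f(t)\,dt,
\end{align*}
both of which are positive linear operators because the kernels $p_{n+1,k}(x)$ and $p_{n,k}(t)$ are nonnegative. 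Reading off the definition \eqref{mod1} gives the decomposition
\[ V_{n,1}(f;x) = a(x,n)\,L_n^{(1)}(f;x) + b(x,n)\,L_n^{(2)}(f;x), \]
and the relation \eqref{seqrel} forces the crucial identity $a(x,n)+b(x,n) = 2a_0(n)-a_1(n) = 1$.

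Next, I would verify the Korovkin conditions for each $L_n^{(i)}$ individually. The required evaluations $L_n^{(i)}(e_j;x)$ for $j=0,1,2$ follow from the beta identity $\int_0^{\infty} p_{n,k}(t)\,t^j\,dt = \dfrac{(k+1)\cdots(k+j)}{(n-1)(n-2)\cdots(n-j-1)}$ combined with the Baskakov sum formulas already tabulated in the proof of Lemma \ref{lemma1}; a short computation gives $L_n^{(i)}(1;x)=1$ exactly, while $L_n^{(i)}(t;x)\to x$ and $L_n^{(i)}(t^2;x)\to x^2$ as $n\to\infty$, uniformly on any compact $A\subset[0,\infty)$. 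Positivity of $L_n^{(1)}, L_n^{(2)}$ then allows me to invoke the classical Korovkin theorem \cite{alto2010Kor} and conclude $L_n^{(i)}(f;x)\to f(x)$ uniformly on $A$ for every $f\in C_B[0,\infty)$.

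To finish, I would rewrite
\[ V_{n,1}(f;x) - f(x) = a(x,n)\bigl[L_n^{(1)}(f;x) - f(x)\bigr] + b(x,n)\bigl[L_n^{(2)}(f;x) - f(x)\bigr] \]
using $a(x,n)+b(x,n)=1$, and bound the right-hand side. Since $a_0(n)\to l$ and $a_1(n)\to m$ are convergent (hence bounded) sequences, the coefficients $a(x,n)$ and $b(x,n)$ are uniformly bounded on $A\times\mathbb{N}$; each bracketed term tends to zero uniformly on $A$ by the Korovkin step. The sum therefore tends to zero uniformly on $A$, yielding not merely the pointwise convergence claimed but in fact uniform convergence on compact subsets.

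The proof is essentially routine once the decomposition is in hand; the only conceptual point, and the main thing to get right, is recognising that one must trade the (missing) positivity of $V_{n,1}$ for the positivity of the two constituent operators $L_n^{(i)}$, which is legitimate precisely because the scalar coefficients $a(x,n), b(x,n)$ are bounded and sum to one. The mechanical verification of the Korovkin conditions for $L_n^{(1)}$ and $L_n^{(2)}$ is the most tedious step but involves no new ideas beyond the beta-function identities already used in Lemma \ref{lemma1}.
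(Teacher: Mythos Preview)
Your argument is correct and takes a somewhat different route from the paper. The paper also decomposes $V_{n,1}$ into two pieces and appeals to Korovkin on each, but instead of pulling out the scalar factors $a(x,n),b(x,n)$ in front of genuinely positive operators, it writes $V_{n,1}=-A_{n,1}-B_{n,1}$ where $A_{n,1},B_{n,1}$ are built from particular sub-combinations of the four terms $-a_1(n)x\,p_{n+1,k}$, $a_1(n)\,p_{n+1,k-1}$, $-a_0(n)\,p_{n+1,k}$, $(a_1(n)x-a_0(n))\,p_{n+1,k-1}$; neither piece converges to $f(x)$ directly (their limits are $l(1-x)f(x)$ and $(-l(1-x)-1)f(x)$, with $l=\lim a_1(n)$), and only the recombination recovers $f$. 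Your decomposition is cleaner: $L_n^{(1)}$ and $L_n^{(2)}$ are manifestly positive, reproduce constants exactly, and each tends to $f$ by the ordinary Korovkin theorem, so the only additional ingredient is the uniform bound on $a(x,n),b(x,n)$ over $A$ coming from convergence of the sequences. The paper's route, by contrast, requires computing all six moments $A_{n,1}(e_j;x)$, $B_{n,1}(e_j;x)$ explicitly and then checking that the two $x$-dependent limiting multipliers of $f(x)$ sum to $-1$; it also leaves the positivity of $A_{n,1},B_{n,1}$ (needed to invoke Korovkin on them) less transparent than the positivity of your $L_n^{(i)}$.
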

	\begin{proof}
		Since the operators are not positive, therefore, we use the extended Korovkin theorem. Let us rewrite the operators $	V_{n,1}(f;x)$ with the combination of positive linear operators in the following way:
		\begin{align}
				V_{n,1}(f;x)=&-A_{n,1}(f;x)-B_{n,1}(f;x),\label{op1}\\
				\text{where} ~~A_{n,1}(f;x)=&(n-1)\sum_{k=0}^{\infty}(-a_1(n)\,x\,p_{n+1,k}(x)+a_1(n)\,p_{n+1,k-1}(x))\int_0^{\infty}p_{n,k}(t)f(t)\,dt,\\
				B_{n,1}(f;x)=&(n-1)\sum_{k=0}^{\infty}(-a_0(n)\,p_{n+1,k}(x)+(a_1(n)\,x-a_0(n))\,p_{n+1,k-1}(x))\int_0^{\infty}p_{n,k}(t)f(t)\,dt.
		\end{align}
		Now, we calculate the moments for $A_{n,1}(f;x)$ and $B_{n,1}(f;x)$ in order to get the uniform convergence.
		\begin{align*}
			A_{n,1}(1;x)=&a_1(n)(1-x);\\
			A_{n,1}(t;x)=&a_1(n)(1-x)\left[x+\dfrac{3x+1}{n-2}\right]+\dfrac{a_1(n)}{n-2};\\
			A_{n,1}(t^2;x)=&a_1(n)(1-x)\left[x^2+\dfrac{8nx^2-4x^2+4(n+1)x}{(n-2)(n-3)}+\dfrac{2a_1(n)n+6a_1(n)}{(n-2)(n-3)}\right],\\
			B_{n,1}(1;x)=&a_1(n)x-2a_0(n);\\
			B_{n,1}(t;x)=&(a_1(n)x-2a_0(n))\left[x+\dfrac{3x}{n-2}\right]+\dfrac{2a_1(n)x-3a_0(n)}{n-2};\\
			B_{n,1}(t^2;x)=&(a_1(n)x-2a_0(n))\left[x^2+\dfrac{8nx^2-4x^2}{(n-2)(n-3)}\right]\\
			&+\dfrac{nx(6a_1(n)-10a_0(n))+9a_1(n)-10a_0(n)x+3a_1(n)-8a_0(n)}{(n-2)(n-3)}.
		\end{align*}
		As the sequences $a_0(n)$ and $a_1(n)$ are convergent. Therefore, by choosing $\displaystyle \lim_{n\rightarrow \infty} a_1(n)=l$ and using Korovkin theorem on $A_{n,1}(f;x)$ and $B_{n,1}(f;x)$, we get
		\begin{align*}
			\lim_{n\rightarrow \infty}A_{n,1}(f;x)=&l(1-x)f(x)\\
			\lim_{n\rightarrow\infty} B_{n,1}(f;x)=&(-l(1-x)-1)f(x).
		\end{align*}
		Now using \eqref{op1}, we get $\displaystyle \lim_{n\rightarrow \infty} V_{n,1}(f;x)=f(x).$\\
		Hence, we get the desired result.
	\end{proof}
	\begin{thm}\label{thm3}
		Let $f\in C_B[0,\infty)$ such that $f^{\prime\prime}(x)$ exists at a certain point $x\in [0,\infty)$ and if the opertaors $V_{n,1}(f;x)$ are positive, then
		\begin{align*}
			\lim_{n\rightarrow\infty}n(V_{n,1}(f;x)-f(x))=&(1+2x)(3l-2m)f^{\prime}(x)+\dfrac{f^{\prime\prime}(x)}{2!}[l(3+16x+16x^2)+m(-2-10x-10x^2)],
		\end{align*}
		where $l$ and $m$ are defined as in Lemma \ref{lemma3}.
	\end{thm}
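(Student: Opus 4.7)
The plan is to follow the classical Voronovskaja template: expand $f$ by Taylor's theorem around $x$, apply $V_{n,1}$ termwise, exploit the moment identities, and control the remainder. Concretely, write
\begin{equation*}
f(t) = f(x) + (t-x)f'(x) + \tfrac{1}{2}(t-x)^2 f''(x) + h(t,x)(t-x)^2,
\end{equation*}
where $h(t,x) \to 0$ as $t \to x$ and $h(\cdot,x)$ is bounded on $[0,\infty)$ because $f \in C_B[0,\infty)$. Applying the linear operator $V_{n,1}$ and using $V_{n,1}(1;x) = 1$ (which holds under the normalization \eqref{seqrel}) gives
\begin{equation*}
V_{n,1}(f;x) - f(x) = f'(x)\,V_{n,1}(t-x;x) + \tfrac{1}{2}f''(x)\,V_{n,1}((t-x)^2;x) + V_{n,1}(h(t,x)(t-x)^2;x).
\end{equation*}

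Next I would multiply through by $n$ and pass to the limit, invoking the two limits computed in Corollary \ref{lemma3}, namely $n V_{n,1}(t-x;x) \to (1+2x)(3l-2m)$ and $n V_{n,1}((t-x)^2;x) \to l(3+16x+16x^2) - m(2+10x+10x^2)$. These immediately produce the two explicit terms appearing on the right-hand side of the theorem.

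The main obstacle is to show that the remainder $n\,V_{n,1}(h(t,x)(t-x)^2;x)$ tends to $0$. The standard device is the Cauchy--Schwarz inequality, valid since the operators are positive by hypothesis:
\begin{equation*}
\bigl| n\,V_{n,1}(h(t,x)(t-x)^2;x) \bigr| \le \sqrt{V_{n,1}(h^2(t,x);x)} \cdot \sqrt{n^2\,V_{n,1}((t-x)^4;x)}.
\end{equation*}
From Lemma \ref{lemma2}, the denominator of $V_{n,1}((t-x)^4;x)$ is of order $n^4$ while the numerator is at most of order $n^2$, so $n^2\,V_{n,1}((t-x)^4;x)$ remains bounded as $n\to\infty$. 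For the other factor, $h^2(t,x)$ is bounded and satisfies $h^2(x,x)=0$ with $h^2(\cdot,x)$ continuous at $t=x$; since $V_{n,1}$ is a positive linear operator that satisfies Korovkin's test functions (Theorem \ref{thm1}), one obtains $V_{n,1}(h^2(t,x);x) \to h^2(x,x) = 0$. Combining the two estimates yields the vanishing of the remainder term and hence the claimed asymptotic formula.
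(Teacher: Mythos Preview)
Your proposal is correct and follows essentially the same route as the paper: Taylor expansion with Peano remainder, termwise application of $V_{n,1}$, use of the moment limits from Corollary~\ref{lemma3}, and control of the remainder via Cauchy--Schwarz together with boundedness of $n^2 V_{n,1}((t-x)^4;x)$ and $V_{n,1}(h^2(t,x);x)\to 0$ from Theorem~\ref{thm1}. If anything, you are slightly more explicit than the paper in justifying that $h(\cdot,x)\in C_B[0,\infty)$ so that Theorem~\ref{thm1} actually applies to $h^2$.
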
	
	\begin{proof}
		Consider the Taylor's series for function $f$ about $x$, we get
		$$f(t)=f(x)+(t-x)f^{\prime}(x)+\dfrac{(t-x)^2}{2!}f^{\prime\prime}(x)+\xi(t,x)(t-x)^2,$$
		where $\xi\in C[0,\infty)$ such that $\xi(t,x)\rightarrow 0$ as $t\rightarrow x.$
		Apply the operators \eqref{mod1} on Taylor's series and taking limit as $n\rightarrow \infty$
		\begin{align*}
			\lim_{n\rightarrow\infty}n (V_{n,1}(f;x)-f(x))=&f^{\prime}(x)\lim_{n\rightarrow\infty} nV_{n,1}(t-x;x)+\dfrac{f^{\prime\prime}(x)}{2!}\lim_{n\rightarrow\infty} nV_{n,1}((t-x)^2;x)\\
			&+\lim_{n\rightarrow\infty} nV_{n,1}(\xi(t,x)(t-x)^2;x)\\
			=& f^{\prime}(x)(1+2x)(3l-2m)+\dfrac{f^{\prime\prime}(x)}{2!}[l(3+16x+16x^2)+m(-2-10x-10x^2)]\\
			&+\lim_{n\rightarrow\infty}nV_{n,1}(\xi(t,x)(t-x)^2;x).
		\end{align*} 
		Now, we need to prove that $\displaystyle\lim_{n\rightarrow\infty}nV_{n,1}(\xi(t,x)(t-x)^2;x)=0.$ Since the operators are positive, therefore we can use the Cauchy-Schwarz inequality in the following form:
		$$nV_{n,1}(\xi(t,x)(t-x)^2;x)\leq \sqrt{n^2V_{n,1}((t-x)^4;x)}\sqrt{V_{n,1}(\xi^2(t,x);x)}.$$
	 Now, using the continuity of $\xi(t,x)$ and convergence theorem \ref{thm1}, we get
	 $$\lim_{n\rightarrow\infty}V_{n,1}(\xi^2(t,x);x)=\xi^2(x,x)=0.$$
	 Now, using the central moments given in Lemma \ref{lemma2}, we obtained result.
	\end{proof}
	\begin{thm}\label{thm4}
			Let $f\in C_B[0,\infty)$ such that $f^{\prime\prime}(x)$ exists at a certain point $x\in [0,\infty)$ and the sequences $a_0(n)$ and $a_1(n)$ are bounded, then
		\begin{align*}
			\lim_{n\rightarrow\infty}n(V_{n,1}(f;x)-f(x))=&(1+2x)(3l-2m)f^{\prime}(x)+\dfrac{f^{\prime\prime}(x)}{2!}[l(3+16x+16x^2)+m(-2-10x-10x^2)],
		\end{align*}
			where $l$ and $m$ are defined as in Lemma \ref{lemma3}.
	\end{thm}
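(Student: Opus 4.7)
My plan is to follow the Taylor expansion argument of Theorem \ref{thm3}, but replace the single application of Cauchy--Schwarz (which needed positivity of $V_{n,1}$) by one applied componentwise after splitting $V_{n,1}$ into positive operators with bounded coefficients. First I would write
$$f(t)=f(x)+(t-x)f'(x)+\dfrac{(t-x)^2}{2!}f''(x)+\xi(t,x)(t-x)^2,$$
with $\xi(t,x)\to 0$ as $t\to x$, apply $n V_{n,1}(\cdot;x)$, and invoke Corollary \ref{lemma3} to pick out the leading contributions $(1+2x)(3l-2m)f'(x)$ and $\dfrac{f''(x)}{2!}[l(3+16x+16x^2)-m(2+10x+10x^2)]$. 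Since Corollary \ref{lemma3} relies only on the moment formulas in Lemma \ref{lemma2}, which hold regardless of positivity, this part carries over verbatim from Theorem \ref{thm3}.

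The new ingredient is to show that $n V_{n,1}(\xi(t,x)(t-x)^2;x)\to 0$ without using positivity of $V_{n,1}$. To this end I would split the weight $p^1_{n,k}(x)$ and write
$$V_{n,1}(f;x) = a(x,n)\,\widetilde V^{(1)}_n(f;x) + b(x,n)\,\widetilde V^{(2)}_n(f;x),$$
where
$$\widetilde V^{(1)}_n(f;x) = (n-1)\sum_{k=0}^{\infty} p_{n+1,k}(x)\int_0^{\infty} p_{n,k}(t)\,f(t)\,dt,$$
$$\widetilde V^{(2)}_n(f;x) = (n-1)\sum_{k=0}^{\infty} p_{n+1,k-1}(x)\int_0^{\infty} p_{n,k}(t)\,f(t)\,dt,$$
are positive linear operators (with the convention $p_{n+1,-1}\equiv 0$). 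The boundedness hypothesis on $a_0(n), a_1(n)$ ensures $|a(x,n)|$ and $|b(x,n)|$ remain bounded on every compact $x$-interval, so it suffices to show $n\widetilde V^{(i)}_n(\xi(t,x)(t-x)^2;x)\to 0$ for $i=1,2$.

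For each positive $\widetilde V^{(i)}_n$ I would apply Cauchy--Schwarz:
$$\bigl|\widetilde V^{(i)}_n(\xi(t,x)(t-x)^2;x)\bigr|\le \sqrt{\widetilde V^{(i)}_n(\xi^2(t,x);x)}\,\sqrt{\widetilde V^{(i)}_n((t-x)^4;x)}.$$
The first factor tends to zero by Korovkin's theorem applied to $\widetilde V^{(i)}_n$, once I verify that $\widetilde V^{(i)}_n$ reproduces $1, t, t^2$ in the limit; the second factor I would bound as $O(n^{-2})$ via a direct fourth central moment estimate. Multiplying by $n$ then gives $n\widetilde V^{(i)}_n(\xi(t,x)(t-x)^2;x)\to 0$, and the decomposition yields $nV_{n,1}(\xi(t,x)(t-x)^2;x)\to 0$, completing the argument. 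The main obstacle will not be conceptual but computational: carrying out the moment and fourth central moment calculations for the auxiliary positive operators $\widetilde V^{(1)}_n,\widetilde V^{(2)}_n$ in parallel to Lemmas \ref{lemma1}--\ref{lemma2} requires careful bookkeeping of beta-function integrals, but follows the same template already set up in the paper.
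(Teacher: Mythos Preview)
Your proposal is correct and uses the same core decomposition as the paper: split $V_{n,1}$ into two positive pieces with coefficients $a(x,n)$ and $b(x,n)$, bound those coefficients using the boundedness of $a_0(n),a_1(n)$, and then estimate the remainder on each positive piece separately. The one genuine methodological difference lies in how the remainder $n\widetilde V^{(i)}_n(\xi(t,x)(t-x)^2;x)$ is killed. You reuse the Cauchy--Schwarz/Korovkin argument from Theorem~\ref{thm3} componentwise, which is clean and avoids any new computations beyond checking the moments of $\widetilde V^{(1)}_n,\widetilde V^{(2)}_n$ through order four. The paper instead carries out a direct $\varepsilon$--$\delta$ argument: it splits the integration domain into $I_1=(x-\delta,x+\delta)\cap[0,\infty)$ and its complement $I_2$, uses $|\xi|<\varepsilon$ on $I_1$ and $|\xi|\le M(t-x)^2/\delta^2$ on $I_2$, and then writes down explicit polynomial-in-$x$ bounds of order $1/n$ and $1/n^2$ for the resulting second and fourth moment integrals. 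Your route is shorter and more conceptual; the paper's route is more elementary (no Korovkin needed for the auxiliary operators) and produces an explicit quantitative bound on the remainder, at the cost of a longer computation.
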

	\begin{proof}
		The proof of this theorem follows the same approach as in previous Thm \ref{thm3}, therefore we only need to prove that $$\displaystyle\lim_{n\rightarrow\infty}nV_{n,1}(\xi(t,x)(t-x)^2;x)=0.$$
		Therefore, consider \begin{align*}
			V_{n,1}(f;x)=&(n-1)\sum_{k=0}^{\infty}p_{n,k}^1(x)\int_0^{\infty}p_{n,k}(t)\,f(t)\,dt\\
			=&(n-1)\sum_{k=0}^{\infty}(a(x,n)p_{n+1,k}(x)+b(x,n)p_{n+1,k-1}(x))\int_0^{\infty}p_{n,k}(t)\,f(t)\,dt\\
			=&(n-1)\sum_{k=0}^{\infty}a(x,n)p_{n+1,k}(x)\int_0^{\infty}p_{n,k}(t)\,f(t)\,dt\\
			&+ (n-1)\sum_{k=0}^{\infty}b(x,n)p_{n+1,k}(x)\int_0^{\infty}p_{n,k+1}(t)\,f(t)\,dt
		\end{align*}
		 Now,  
		 \begin{align*}
		 	\mid V_{n,1}(\xi(t,x)(t-x)^2;x)\mid \leq & (n-1)\sum_{k=0}^{\infty}\mid a(x,n)\mid p_{n+1,k}(x)\int_0^{\infty}p_{n,k}(t)\,\mid \xi(t,x)\mid (t-x)^2\,dt\\
		 	& +(n-1)\sum_{k=0}^{\infty}\mid b(x,n)\mid p_{n+1,k}(x)\int_0^{\infty}p_{n,k+1}(t)\,\mid \xi(t,x)\mid (t-x)^2\,dt.
		 \end{align*}
		 Since the sequences are bounded, therefore $\mid a(x,n) \mid < C$ and $\mid b(x,n)\mid <C.$ Divide $[0,\infty)$ into two parts $I_1$ and $I_2$ for $\delta >0,$ where
		 $$I_1=(x-\delta,x+\delta)\cap [0,\infty)\quad \text{and}\quad I_2=[0,\infty)\backslash I_1.$$
		 Now, 
		 \begin{align*}
		 		\mid V_{n,1}&(\xi(t,x)(t-x)^2;x)\mid \\
		 		\leq & (n-1)C \sum_{k=0}^{\infty}p_{n+1,k}(x)\left[\int_{I_1}p_{n,k}(t)\mid \xi(t,x)\mid (t-x)^2\,dt+\int_{I_2}p_{n,k}(t)\mid \xi(t,x)\mid (t-x)^2\,dt\right]\\
		 		&+(n-1)C \sum_{k=0}^{\infty}p_{n+1,k}(x)\left[\int_{I_1}p_{n,k+1}(t)\mid \xi(t,x)\mid (t-x)^2\,dt+\int_{I_2}p_{n,k+1}(t)\mid \xi(t,x)\mid (t-x)^2\,dt\right]
		 \end{align*}
		 Now, for $I_1,$ we know that $\mid \xi(t,x)\mid<\epsilon $ and for $I_2,$ it is $\mid\xi(t,x)\mid< \dfrac{M(t-x)^2}{\delta^2}$, where $M:=\sup\{\xi(t,x):t\in I_2\}. $
		 Therefore, we get the relation
		 \begin{align*}
		 \mid V_{n,1}&(\xi(t,x)(t-x)^2;x)\mid \\
		 \leq & (n-1)C \sum_{k=0}^{\infty}p_{n+1,k}(x)\left[\epsilon \int_{I_1}p_{n,k}(t) (t-x)^2\,dt+\dfrac{M}{\delta^2}\int_{I_2}p_{n,k}(t) (t-x)^4\,dt\right]\\
		 &+(n-1)C \sum_{k=0}^{\infty}p_{n+1,k}(x)\left[\epsilon \int_{I_1}p_{n,k+1}(t) (t-x)^2\,dt+\dfrac{M}{\delta^2}\int_{I_2}p_{n,k+1}(t) (t-x)^4\,dt\right]\\
		 \leq & \dfrac{4C\epsilon}{(n-2)(n-3)}\left[nx(x+1)+7x^2+7x+2\right]+\dfrac{24MC}{{\delta}^2(n-2)(n-3)(n-4)(n-5)}\left[n^2x^2(x+1)^2\right.\\
		 &\left.+nx(9+42x+66x^2+33x^3)+6+39x+101x^2+124x^3+62x^4\right]
		 \end{align*}
		 Since the sequences $a_0(n)$ and $a_1(n)$ are bounded, therefore we get $\displaystyle\lim_{n\rightarrow\infty}nV_{n,1}(\xi(t,x)(t-x)^2;x)=0.$
		 Hence, the result.
	\end{proof}	
		\section{Second order modification}
		In the previous section, we introduce the first order modification of Baskakov-Duurmeyer operators, which has the linear rate of convergence. Now, inspiring from the perturbation of Baskakov operators, we present an another modification whose rate of convergence is better than first order modification.
		\begin{align*}
			V_{n,2}(f;x)=& (n-1)\sum_{k=0}^{\infty}p_{n,k}^2(x)\int_0^{\infty}p_{n,k}(t)\, f(t)\, dt,
		\end{align*} 
		where $p_{n,k}^2(x)=a(x,n)p_{n+2,k}(x)+d(x,n)p_{n+2,k-1}(x)+a^{\prime}(x,n)p_{n+2,k-2}(x),$\\
		$a(x,n)=a(n)+b(n)x+c(n)x^2, \, d(x,n)=d(n)x(1+x),\, a^{\prime}(x,n)=a(n)-b(n)(1+x)+c(n)(1+x)^2,$\\
		where $a(n), b(n), c(n)$ and $d(n)$ are the unknown sequences to be determined to serve our purposes. Now, by letting $a(n)=1=c(n), b(n)=2,$ and $d(n)=-2,$ we get the classical Baskakov-Durrmeyer operators.
		Firstly, we calculate the moments of these operators.
		\begin{align*}
			V_{n,2}(1;x)=&\sum_{k=0}^{\infty}p_{n,k}^2(x)= a(x,n)+d(x,n)+a^{\prime}(x,n)\\
				=&(2a(n)-b(n)+c(n))+x(d(n)+2c(n))+x^2(d(n)+2c(n)).
		\end{align*}
		In order to achieve $V_{n,2}(1;x)=1,$ we need to choose the unknown sequences that satisfies the relation 
		\begin{align}\label{relation1}
			2a(n)-b(n)+c(n)=1\,\, \text{and}\,\, d(n)=-2c(n).
				\end{align}
		Now, \begin{align}\label{eq.2.1}
			V_{n,2}(t;x)=&\sum_{k=0}^{\infty} p_{n,k}^2(x)\dfrac{k+1}{n-2}.
		\end{align}
		By using \eqref{relation1} and the following identities:
		\begin{align*}
				\sum_{k=0}^{\infty} p_{n+2,k}(x)&=1;\\
					\sum_{k=0}^{\infty} p_{n+2,k-1}(x)&=1;\\
						\sum_{k=0}^{\infty} p_{n+2,k-2}(x)&=1;\\
			\sum_{k=0}^{\infty} p_{n+2,k}(x)k&=(n+2)x;\\
				\sum_{k=0}^{\infty} p_{n+2,k-1}(x)k&=(n+2)x+1;\\
					\sum_{k=0}^{\infty} p_{n+2,k-2}(x)k&=(n+2)x+2,
		\end{align*}
		the \eqref{eq.2.1} becomes
		\begin{align*}
			V_{n,2}(t;x)=x+\dfrac{1}{n-2}[(3-2a(n))(1+2x)].
		\end{align*}
		In order to get $V_{n,2}(t;x)=x,$ we need to choose $a(n)=\frac{3}{2}.$
		Now, for second order moment, we can see that
		$$V_{n,2}(t^2;x)=\sum_{k=0}^{\infty}p_{n,k}^2(x)\dfrac{(k+1)(k+2)}{(n-2)(n-3)}.$$
		Again using the following identities with \eqref{eq.2.1}:
		\begin{align*}
				\sum_{k=0}^{\infty} p_{n+2,k}(x)k^2&=(n+3)(n+2)x^2+(n+2)x;\\
					\sum_{k=0}^{\infty} p_{n+2,k-1}(x)k^2&=(n+3)(n+2)x^2+3(n+2)x+1;\\
						\sum_{k=0}^{\infty} p_{n+2,k-2}(x)k^2&=(n+3)(n+2)x^2+5(n+2)x+4,
		\end{align*}
		we get
		$$V_{n,2}(t^2;x)=x^2+\dfrac{1}{(n-2)(n-3)}[-3-16x-16x^2+2x(n+c(n))+2x^2(n+c(n))].$$
		In order to get second order modification, we need to choose $c(n)=-n.$ Therefore, we choose all the unknown sequences $a(n)=\frac{3}{2}$, $b(n)=2-n,$ $c(n)=-n,$ and $d(n)=2n.$\\
		Hence, the operators becomes
		\begin{align}
			\hat{V}_{n,2}(f;x)=&(n-1)\left[\left(\frac{3}{2}+2x-nx(1+x)\right)\sum_{k=0}^{\infty}p_{n+2,k}(x)\int_0^{\infty}p_{n,k}(t)f(t)\,dt\right.\\
	&+2nx(1+x)\sum_{k=0}^{\infty}p_{n+2,k-1}(x)\int_0^{\infty}p_{n,k}(t)f(t)\,dt\\
	&+ 	\left.\left(-\frac{1}{2}-2x-nx(1+x)\right)\sum_{k=0}^{\infty}p_{n+2, k-2}(x)\int_0^{\infty}p_{n,k}(t)f(t)\,dt\right].
		\end{align} 
		\begin{lemma} The moments of the operators $	\hat{V}_{n,2}(f;x)$ are as follow:
			\begin{align*}
		\hat{V}_{n,2}(1;x)=&1;\\
		\hat{V}_{n,2}(t;x)=&x;\\
		\hat{V}_{n,2}(t^2;x)=&x^2+\dfrac{1}{(n-2)(n-3)}[-3-16x-16x^2];\\
		\hat{V}_{n,2}(t^3;x)=& \dfrac{1}{(n-2)(n-3)(n-4)}\left[x^3(n^3-9n^2-46n-48)+x^2(-84n-132)\right.\\
		&\left.+x(-21n-114)-21\right];\\
		\hat{V}_{n,2}(t^4;x)=&\dfrac{1}{(n-2)(n-3)(n-4)(n-5)}\left[x^4(n^4-14n^3-133n^2-334n-264)+x^3(-264n^2-1032n-1008)\right.\\
	&	\left.+x^2(-78n^2-1014n-1428)+x(-240n-864)-144\right];\\
	\hat{V}_{n,2}(t^5;x)=&\dfrac{1}{(n-2)(n-3)(n-4)(n-5)(n-6)}\left[x^5(n^5-20n^4-305n^3-1360n^2-2516n-1680)\right.\\
	&x^4(-640n^3-4560n^2-10640n-8160)+x^3(-210n^3-4890n^2-16860n-15840)+x^2(-1350n^2-11550n-15300)\\
	&+\left.x(-2400n-7200)-1080\right];\\
	\hat{V}_{n,2}(t^6;x)=&\dfrac{1}{(n-2)(n-3)(n-4)(n-5)(n-6)(n-7)}\left[x^6(n^6-27n^5-605n^4-4185n^3-13436n^2-20628n-12240)\right.\\
	&+x^5(-1320n^4-14880n^3-61320n^2-109680n-72000)+x^4(-465n^4-16950n^3-105375n^2-235050n-176760)\\
	&+x^3(-5160n^3-78840n^2-252960n-231840)+x^2(-18900n^2-134100n-171000)\\
	&\left.+x(-24480n-66240)-9000\right].
		\end{align*}
		\end{lemma}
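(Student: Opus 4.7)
The plan is to compute each moment $\hat{V}_{n,2}(t^r;x)$ for $r=0,1,\dots,6$ by reducing the integral to a beta function, expanding the product $(k+1)(k+2)\cdots(k+r)$ as a polynomial in $k$, and then using Baskakov-type sum identities for $p_{n+2,k-j}(x)\,k^i$ with $j\in\{0,1,2\}$. Specifically, I would first record the identity
\[
(n-1)\int_0^{\infty} p_{n,k}(t)\,t^r\,dt \;=\; \frac{(k+1)(k+2)\cdots(k+r)}{(n-2)(n-3)\cdots(n-r-1)},
\]
which follows from $\int_0^\infty t^{k+r}(1+t)^{-(n+k)}dt = B(k+r+1,n-r-1)$ and cancellation of the binomial factor. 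This reduces the operator to
\[
\hat{V}_{n,2}(t^r;x) \;=\; \sum_{k=0}^{\infty} p_{n,k}^{2}(x)\,\frac{(k+1)(k+2)\cdots(k+r)}{(n-2)(n-3)\cdots(n-r-1)}.
\]

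Next, I would extend the list of Baskakov-type moment identities used in the proof of Lemma \ref{lemma1} to powers $k^5$ and $k^6$ for each of the three shifted basis families $p_{n+2,k}(x)$, $p_{n+2,k-1}(x)$, $p_{n+2,k-2}(x)$. These are obtained routinely from $\sum_k p_{n+2,k}(x) k^i$ (derivatives of the generating function / rising factorial recursions) and the index shifts $k\mapsto k+1$, $k\mapsto k+2$ for the second and third families. With these in hand, expanding $(k+1)\cdots(k+r)$ in powers of $k$ and substituting termwise gives $\hat{V}_{n,2}(t^r;x)$ as a rational function of $n$ and $x$ once the chosen sequences $a(n)=3/2$, $b(n)=2-n$, $c(n)=-n$, $d(n)=2n$ are plugged into $a(x,n)$, $d(x,n)$, $a'(x,n)$.

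The cases $r=0,1,2$ are essentially already verified in the derivation preceding the lemma (indeed, $a(n)=3/2$ and $c(n)=-n$ were chosen precisely so that $\hat{V}_{n,2}(1;x)=1$, $\hat{V}_{n,2}(t;x)=x$, and the $x^2$ coefficient of $\hat{V}_{n,2}(t^2;x)$ is exactly $1$); it only remains to check that the residual term for $r=2$ collapses to $(n-2)^{-1}(n-3)^{-1}(-3-16x-16x^2)$, which is immediate after inserting $c(n)=-n$. For $r=3,4,5,6$ the computation is the same in structure but longer: collect the three sums with coefficients $a(x,n)$, $d(x,n)$, $a'(x,n)$, combine like powers of $n$ and $x$, and read off the final polynomial in the bracket.

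The main obstacle is purely the bookkeeping: the $x$-polynomial coefficients $a(x,n)$ and $a'(x,n)$ each contain a leading $-nx(1+x)$, which multiplied by $k^i$-sums of degree up to $n^i$ in $n$ produces large cancellations, and one must verify that the top-degree terms in $n$ assemble correctly so that e.g.\ the leading monomial in $\hat{V}_{n,2}(t^6;x)$ is $x^6$ (i.e.\ the $n^6$ contribution in the bracket matches $n^6$ from the denominator expansion $(n-2)\cdots(n-7)$). It is safest to organise the calculation as a table in the variable $n$, check low-degree cases against a symbolic computation, and then assemble the final brackets listed in the statement.
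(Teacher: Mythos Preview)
Your proposal is correct and follows exactly the approach the paper uses: the paper carries out precisely this computation for $r=0,1,2$ in the text preceding the lemma (beta-integral reduction to $(k+1)\cdots(k+r)/[(n-2)\cdots(n-r-1)]$, then the shifted Baskakov sum identities), and then states the lemma for $r\le 6$ without proof, leaving the higher moments as the same routine but lengthy calculation you outline.
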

		\begin{lemma} The central moments of the operators $	\hat{V}_{n,2}(f;x)$ are as follow:
			\begin{align*}
			\hat{V}_{n,2}(t-x;x)=&0;\\
			\hat{V}_{n,2}((t-x)^2;x)=&\dfrac{1}{(n-2)(n-3)}[-3-16x-16x^2];\\
			\hat{V}_{n,2}((t-x)^3;x)=&\dfrac{1}{(n-2)(n-3)(n-4)}\left[n(-12x-36x^2-24x^3)-21-150x-324x^2-216x^3\right];\\
			\hat{V}_{n,2}((t-x)^4;x)=&\dfrac{1}{(n-2)(n-3)(n-4)(n-5)}\left[n^2(-12x^2-24x^3-12x^4)\right.\\
			&\left.+n(-156x-816x^2-1320x^3-660x^4)-144-1284x-4068x^2-5568x^3-2784x^4\right]\\
			\hat{V}_{n,2}((t-x)^5;x)=&\dfrac{1}{(n-2)(n-3)(n-4)(n-5)(n-6)}\left[n^2(-360x^2-1440x^3-1800x^4-720x^5)\right.\\
			&+n(-1680x-12120x^2-31680x^3-35400x^4-14160x^5)\\
			&\left.-1080-11520x-47520x^2-96480x^3-97200x^4-38880x^5\right]\\
			\hat{V}_{n,2}((t-x)^6;x)=&\dfrac{1}{(n-2)(n-3)(n-4)(n-)(n-6)(n-7)}\left[n^3(-240x^3-720x^4-720x^5-240x^6)\right.\\
			&n^2(-6660x^2-39960x^3-86580x^4-79920x^5-26640x^6)\\
			&+n(-18000x-163620x^2-584040x^3-1024020x^4-878400x^5-292800x^6)\\
			&\left.-9000-111600x-564120x^2-1506960x^3-2258280x^4-1805760x^5-601920x^6\right].
			\end{align*}
		\end{lemma}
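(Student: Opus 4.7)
The plan is to deduce every central moment by linearity from the ordinary moments tabulated in the preceding lemma. Writing $(t-x)^j = \sum_{i=0}^{j}\binom{j}{i}(-x)^{j-i}t^{i}$ and using that $\hat{V}_{n,2}$ is linear in its first argument, I would expand
\[
\hat{V}_{n,2}((t-x)^j;x)=\sum_{i=0}^{j}\binom{j}{i}(-x)^{j-i}\,\hat{V}_{n,2}(t^{i};x),
\]
for $j=1,2,\ldots,6$, and substitute the closed forms of $\hat{V}_{n,2}(t^{i};x)$ already recorded. Because $\hat{V}_{n,2}(1;x)=1$ and $\hat{V}_{n,2}(t;x)=x$ by the very design of the sequences $a(n),b(n),c(n),d(n)$, the first central moment vanishes at once, and the leading $x^{j}$ terms coming from the binomial expansion of $t^{j}-jx\,t^{j-1}+\binom{j}{2}x^{2}t^{j-2}-\cdots$ will cancel out systematically against the $x^{j}$ contributions from the higher ordinary moments — this cancellation is precisely the source of the second-order improvement encoded by the choice $c(n)=-n$.

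For each $j$ from $2$ to $6$, I would first bring the ordinary moments $\hat{V}_{n,2}(t^{i};x)$ to the common denominator $(n-2)(n-3)\cdots(n-j-1)$, multiplying each by the appropriate ascending factor of the missing linear factors in $(n-\ell)$. Then I would assemble the binomial sum, collect according to powers of $x$ and powers of $n$, and verify that the alleged cancellations (leading $x^{j}$, and in fact the next-to-leading terms in $n$) take place. For instance, for $j=2$ the claim $\hat{V}_{n,2}((t-x)^2;x)=\hat{V}_{n,2}(t^{2};x)-2x\cdot x+x^{2}$ reduces immediately to $\hat{V}_{n,2}(t^{2};x)-x^{2}$, which is exactly the bracket $[-3-16x-16x^2]/[(n-2)(n-3)]$ stated.

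The main obstacle is purely bookkeeping: for $j=5$ and $j=6$ the common denominator is a product of five or six consecutive shifted factors, and the binomial combination involves six or seven polynomial numerators in $(n,x)$ of total degree up to $12$. To control errors I would arrange the computation row by row in a table indexed by $(i,\text{power of }x,\text{power of }n)$, verify the vanishing of the $x^{j}n^{j}$ and $x^{j}n^{j-1}$ monomials as a sanity check (these must drop out so that $\hat{V}_{n,2}((t-x)^j;x)=O(n^{-\lceil j/2\rceil})$, consistent with second-order modification), and then read off the residual polynomial. Since the computation is linear and uses only the already-established polynomial identities, no new analytical tool is needed, so I would simply say the identities follow by applying binomial expansion to the previous lemma and omit the tedious but routine simplifications — exactly as the authors did for Lemma~\ref{lemma2}.
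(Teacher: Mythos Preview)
Your approach is correct and matches the paper's: the authors give no proof for this lemma, implicitly relying on the same binomial expansion and linearity argument they spelled out (and then omitted) for Lemma~\ref{lemma2}. There is nothing to add.
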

		\begin{thm}
				Let $f\in C_B[0,\infty)$ such that $f^{\prime\prime}(x)$ and $f^{\prime\prime\prime}(x)$ exist at a certain point $x\in [0,\infty).$ Then, we get
			\begin{align*}
				\lim_{n\rightarrow\infty}(	\hat{V}_{n,2}(f;x)-f(x))=&O\left(\frac{1}{n^2}\right).
			\end{align*}
		\end{thm}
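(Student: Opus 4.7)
I would proceed along the Voronovskaja template of Theorem \ref{thm3}, but now expand $f$ one order higher. Taylor's formula about $x$ (valid since $f''(x)$ and $f'''(x)$ exist) gives
\begin{equation*}
f(t) = f(x) + (t-x)f'(x) + \frac{1}{2}(t-x)^2 f''(x) + \frac{1}{6}(t-x)^3 f'''(x) + \xi(t,x)(t-x)^3,
\end{equation*}
with $\xi(t,x)\to 0$ as $t\to x$. Applying $\hat{V}_{n,2}$ and invoking $\hat{V}_{n,2}(1;x)=1$ and $\hat{V}_{n,2}(t-x;x)=0$ from the central-moment lemma yields
\begin{equation*}
\hat{V}_{n,2}(f;x)-f(x)=\frac{f''(x)}{2}\hat{V}_{n,2}((t-x)^2;x)+\frac{f'''(x)}{6}\hat{V}_{n,2}((t-x)^3;x)+\hat{V}_{n,2}\bigl(\xi(t,x)(t-x)^3;x\bigr).
\end{equation*}

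The two explicit Taylor terms are immediately $O(1/n^2)$ from the central-moment lemma: $\hat{V}_{n,2}((t-x)^2;x)=-(3+16x+16x^2)/[(n-2)(n-3)]$ has constant numerator over quadratic denominator, and $\hat{V}_{n,2}((t-x)^3;x)$ has numerator of degree one in $n$ over a cubic denominator. So they contribute at most $C(f,x)/n^2$ in absolute value. For the Peano remainder $R_n:=\hat{V}_{n,2}\bigl(\xi(t,x)(t-x)^3;x\bigr)$, I would mimic the proof of Theorem \ref{thm4}: partition $[0,\infty)=I_1\cup I_2$ with $I_1=(x-\delta,x+\delta)$, use $|\xi(t,x)|<\epsilon$ on $I_1$, and use a polynomial bound of the form $|\xi(t,x)(t-x)^3|\leq K(1+(t-x)^3)$ together with $(t-x)^{2k}/\delta^{2k}\geq 1$ on $I_2$. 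Bounding the $I_1$ contribution by Cauchy--Schwarz against the fourth central moment (which is $O(1/n^2)$) and the $I_2$ contribution via the sixth central moment (which is $O(1/n^3)$), one obtains $R_n=o(1/n^2)$, completing the estimate $\hat{V}_{n,2}(f;x)-f(x)=O(1/n^2)$.

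The main obstacle is the non-positivity of $\hat{V}_{n,2}$: since $a(x,n)=\frac{3}{2}+2x-nx(1+x)$ and $a'(x,n)$ have the ``wrong'' sign for large $n$, a direct Cauchy--Schwarz step on $\hat{V}_{n,2}$ is unavailable. I would circumvent this by decomposing $\hat{V}_{n,2}$ as a signed combination of the three positive linear operators arising from $\{p_{n+2,k},p_{n+2,k-1},p_{n+2,k-2}\}$, in the spirit of the decomposition of $V_{n,1}$ used in the proof of Theorem \ref{thm2}, and then carry out the $I_1/I_2$ splitting on each positive piece. A secondary technical point is that the coefficients $a(x,n), d(x,n), a'(x,n)$ themselves grow like $n$, so the bookkeeping must verify that this factor of $n$ is absorbed by the tail moments of order $1/n^3$ rather than only $1/n^2$.
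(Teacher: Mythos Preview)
The paper actually supplies \emph{no proof} of this theorem---the statement is followed immediately by the bibliography---so there is nothing to compare your argument against. I can therefore only assess the proposal on its own merits.

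Your treatment of the explicit Taylor terms is correct: with the central-moment lemma you have $\hat V_{n,2}((t-x)^2;x)=O(n^{-2})$ and $\hat V_{n,2}((t-x)^3;x)=O(n^{-2})$, so those contributions are fine. The difficulty is entirely in the Peano remainder $R_n=\hat V_{n,2}(\xi(t,x)(t-x)^3;x)$, and here your plan has a genuine gap. If you decompose $\hat V_{n,2}=a(x,n)P_0+d(x,n)P_1+a'(x,n)P_2$ into the three positive Baskakov--Durrmeyer--type pieces, then each $P_j$ has the \emph{standard} moment behaviour $P_j((t-x)^{2m};x)=O(n^{-m})$, not the improved $O(n^{-2})$ of $\hat V_{n,2}$. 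Hence on $I_1$ one only gets
\[
|a(x,n)|\,\epsilon\,P_0(|t-x|^3;x)\;\le\;O(n)\cdot\epsilon\cdot\bigl[P_0((t-x)^2)P_0((t-x)^4)\bigr]^{1/2}
= O(n)\cdot\epsilon\cdot O(n^{-3/2})=\epsilon\,O(n^{-1/2}),
\]
which is far from $O(n^{-2})$. The factor of $n$ in the coefficients is absorbed on $I_2$ by the sixth moment, as you say, but it is \emph{not} absorbed on $I_1$; the sentence ``the bookkeeping must verify that this factor of $n$ is absorbed'' is precisely the step that fails.

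To repair this you would need to exploit the cancellation built into the combination $a(x,n)P_0+d(x,n)P_1+a'(x,n)P_2$ rather than destroy it with a triangle inequality. One natural route is to isolate the second-difference structure: writing $B=nx(1+x)$, $A=\tfrac32+2x$, $A'=-\tfrac12-2x$ one has $\hat V_{n,2}=AP_0+A'P_2 - B(P_0-2P_1+P_2)$, with $A,A'$ bounded; the burden then shifts to showing that the second difference $(P_0-2P_1+P_2)((t-x)^{2m};x)$ gains an extra factor of $n^{-1}$ over $P_j((t-x)^{2m};x)$. Without an argument of this type (or a stronger smoothness hypothesis on $f$ permitting a fourth-order Taylor expansion), the remainder estimate as written does not yield $O(n^{-2})$.
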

%		\begin{proof}
%			Consider the Taylor's series
%			$f(t)=f(x)+(t-x)f^{\prime}(x)+(t-x)^2\dfrac{f^{\prime\prime}(x)}{2}+(t-x)^3\dfrac{f^{\prime\prime\prime}(x)}{3!}+\xi(t,x)(t-x)^3,$
%			where $\xi(t,x) \rightarrow 0$ as $t \rightarrow x.$
%			Apply the operators $	\hat{V}_{n,2}(.,x)$ on both sides, we get
%			\begin{align*}
%				n^2 \lim_{n\rightarrow \infty} 	\hat{V}_{n,2}(f;x)-f(x)=&(-3-16x-16x^2)\dfrac{f^{\prime\prime}(x)}{2}+(-12x-36x^2-24x^3)\dfrac{f^{\prime\prime}(x)}{6}+n^2\lim_{n\rightarrow \infty}	\hat{V}_{n,2}(\xi(t,x)(t-x)^3;x).
%			\end{align*}
%			Now, we only need to show that $n^2\lim_{n\rightarrow \infty}	\hat{V}_{n,2}(\xi(t,x)(t-x)^3;x)=0.$
%			Therefore, let 
%			\begin{align*}
%				\mid \hat{V}_{n,2}(\xi(t,x)(t-x)^3;x)\mid \leq sum_{k=0}^{\infty}
%			\end{align*}
%%		\end{proof}
	\bibliographystyle{abbrv}
	\bibliography{bibfile2}

\begin{thebibliography}{10}

\bibitem{abel2003esti}
U.~Abel and V.~Gupta.
\newblock An estimate of the rate of convergence of a bezier variant of the
  baskakov-kantorovich operators for bounded variation functions.
\newblock {\em Demonstratio Mathematica}, 36(1):123--136, 2003.

\bibitem{Acu2019}
A.~M. Acu, V.~Gupta, and G.~Tachev.
\newblock Better numerical approximation by {D}urrmeyer type operators.
\newblock {\em Results Math}, 74:1--24, 2019.

\bibitem{agrawal2014appro}
P.~Agrawal and A.~S. Kumar.
\newblock Approximation by q-baskakov durrmeyer type operators.
\newblock {\em Rendiconti del Circolo Matematico di Palermo}, 63(1):73--89,
  2014.

\bibitem{alto2010Kor}
F.~Altomare.
\newblock {K}orovkin-type theorems and approximation by positive linear
  operators.
\newblock {\em Survey Approx Theory}, 5:92--164, 2010.

\bibitem{aral2010durr}
A.~Aral and V.~Gupta.
\newblock On the durrmeyer type modification of the q-baskakov type operators.
\newblock {\em Nonlinear Analysis: Theory, Methods \& Applications},
  72(3-4):1171--1180, 2010.

\bibitem{aral2011gene}
A.~Aral and V.~Gupta.
\newblock Generalized q-baskakov operators.
\newblock {\em Mathematica Slovaca}, 61:619--634, 2011.

\bibitem{Baskakov1957}
V.~A. Baskakov.
\newblock An instance of a sequence of linear positive operators in the space
  of continuous functions.
\newblock In {\em Doklady Akademii Nauk}, volume 113, pages 249--251, 1957.

\bibitem{Bernstein1912}
S.~N. Bernstein.
\newblock D{\'e}monstration du th{\'e}oreme de {W}eierstrass fond{\'e}e sur le
  calcul des probabilities.
\newblock {\em Communications de la Soci\'{e}t\'{e} Math\'{e}matique de
  Kharkov}, 13:1--2, 1912.

\bibitem{boeh1964}
T.~K. Boehme and A.~M. Bruckner.
\newblock Functions with convex means.
\newblock {\em Pacific Journal of Mathematics}, 14(4):1137--1149, 1964.

\bibitem{finta2008dur}
Z.~Finta and V.~Gupta.
\newblock Durrmeyer type generalized baskakov-beta operators.
\newblock {\em Southeast Asian Bulletin of Mathematics}, 32(6), 2008.

\bibitem{gupta2014different}
V.~Gupta.
\newblock Different durrmeyer variants of baskakov operators, 2014.

\bibitem{gupta2011some}
V.~Gupta and A.~Aral.
\newblock Some approximation properties of q-baskakov--durrmeyer operators.
\newblock {\em Applied Mathematics and Computation}, 218(3):783--788, 2011.

\bibitem{gupta2019modi}
V.~Gupta, G.~Tachev, and A.~M. Acu.
\newblock Modified {K}antorovich operators with better approximation
  properties.
\newblock {\em Numer Algorithms}, 81:125--149, 2019.

\bibitem{jabbar2024bet}
A.~F. Jabbar and A.~K. Hassan.
\newblock Better approximation properties by new modified baskakov operators.
\newblock {\em Journal of Applied Mathematics}, 2024(1):8693699, 2024.

\bibitem{kajla2019modified}
A.~Kajla and T.~Acar.
\newblock Modified $\alpha$--{B}ernstein operators with better approximation
  properties.
\newblock {\em Ann Funct Anal}, 10(4):570--582, 2019.

\bibitem{khosravian2018new}
H.~Khosravian-Arab, M.~Dehghan, and M.~R. Eslahchi.
\newblock A new approach to improve the order of approximation of the
  {B}ernstein operators: {T}heory and {A}pplications.
\newblock {\em Numer Algorithms}, 77:111--150, 2018.

\bibitem{sahai1985simu}
A.~Sahai and G.~Prasad.
\newblock On simultaneous approximation by modified lupas operators.
\newblock {\em J. Approx. Theory}, 45(2):122--128, 1985.

\bibitem{szasz1950general}
O.~Sz$\acute{a}$sz.
\newblock Generalization of {S}. {B}ernstein’s polynomials to the infinite
  interval.
\newblock {\em J Res Natl Bur Stand}, 45(3):239--245, 1950.

\bibitem{totik1985satu}
V.~Totik.
\newblock Saturation of kantorovich type operators.
\newblock {\em Periodica Mathematica Hungarica}, 16(2):115--126, 1985.

\end{thebibliography}
%	\begin{thebibliography}{99}
%		
%	
%	\end{thebibliography}
%	

\end{document}